\documentclass[12pt, reqno, twoside, letterpaper]{amsart}

\usepackage{paperstyle}
\usepackage{booktabs}

\makeatletter
\def\section{\@startsection{section}{1}%
  \z@{.7\linespacing\@plus\linespacing}{.5\linespacing}%
  {\normalfont\large\scshape\centering}}
\makeatother

\newif\ifdraft
\draftfalse
\ifdraft
  \long\def\td#1{\ccm{#1}}
  \long\def\tdpar#1{\par\noindent\ccm{#1}\par}
\else
  \long\def\td#1{}
  \long\def\tdpar#1{}
\fi

\newcommand{\G}{\mathcal{G}}
\newcommand{\Orb}{\mathcal{O}}
\newcommand{\eq}[1]{\e_q\!\left(#1\right)}
\newcommand{\bl}{\bar}

\title[Products of two elements of Granville's model]{Products of two
elements of\\ Granville's random model in residue classes}

\author{William D. Banks}
\address{Department of Mathematics\\
University of Missouri\\
Columbia, MO 65211\\
USA}
\email{bankswd@missouri.edu}

\keywords{Random models of the primes, Granville's model, modular
hyperbola, Kloosterman sums, products of primes in arithmetic
progressions}
\subjclass[2020]{11N05, 11L05, 11K99}

\date{September 29, 2026}

\begin{document}

\begin{abstract}
Erd\H{o}s, Odlyzko and S\'ark\"ozy conjectured that every
reduced residue class modulo $q$ contains a product of two
primes not exceeding $q$; their conjecture remains open even under
the GRH. We prove that its analogue
holds, unconditionally and in the wider range $q^{\Theta}$ for
any fixed $\Theta>\tfrac34$, in Granville's random model of the
primes. Almost surely, every reduced class modulo every
sufficiently large modulus is representable, and the number of
representations is asymptotic to its expected order. The choice of the
model is decisive; the corresponding statement in
the random prime model of Banks, Ford and Tao fails with positive
probability, an elementary local defect that Granville's
discard step removes.
\end{abstract}
\maketitle

\section{Introduction and statement of results}\label{sec:intro}

\subsection{Background}

For which $N$ can every reduced residue class modulo $q$ be
represented by a product of $k$ primes not exceeding $N$? Given
$q,k,N\in\N$, let
\[
\cS_{q,k}(N)\defeq\big\{a\in(\Z/q\Z)^\times:p_1\cdots
p_k\equiv a\bmod q
\text{~for some primes~}p_1,\ldots,p_k\le N\big\}.
\]
Linnik~\cite{Lin1,Lin2} showed that every reduced class modulo
$q$ contains a prime $p\ll q^{L}$ for some absolute
constant $L$.\footnote{The current record $L=5$ is due to Xylouris~\cite{Xy}.}
In other words, $\cS_{q,1}(N)=(\Z/q\Z)^\times$ once $N/q^{L}$ is large
enough. 

Erd\H{o}s, Odlyzko and S\'ark\"ozy (EOS)~\cite{EOS} posed the
product analogue of Linnik's problem and conjectured that
$\cS_{q,2}(q)=(\Z/q\Z)^\times$ for all large $q$, that is, 
every class $a\in(\Z/q\Z)^\times$ contains a product $p_1p_2$ of two
primes $p_1,p_2\le q$.\footnote{The conjecture is often credited
to Erd\H{o}s alone, as in~\cite{MT}.} Their paper gives
partial results assuming strong zero-free regions for Dirichlet
$L$-functions.

The EOS conjecture remains open even under the Generalized Riemann Hypothesis (GRH).
To see why, consider the natural approach of
using Dirichlet characters modulo $q$ to detect the congruence
$p_1p_2\equiv a\bmod q$. The principal character $\chi_0$
produces the main term $\pi(q)^2/\varphi(q)$, whereas by Parseval the mean
square of the sums $\sum_{p\le q}\chi(p)$ over $\chi\ne\chi_0$
equals $(1+o(1))\,\pi(q)$, so bounding the error terms by absolute values
yields a total that exceeds the main term by the factor
$\varphi(q)/\pi(q)\gg\log q/\log\log q$. The loss comes from the sum over the
characters, not from any single character. The square-root saving in
each individual sum, which is all that GRH provides, does not recover this loss.

The EOS conjecture is also resistant to sieve methods,
which can produce products of two almost primes in a given class
but not products of two primes, since integers with exactly two
prime factors are the standard example of the parity problem in
sieve theory.

Unconditional results are known for several weaker versions
of the conjecture. Friedlander, Kurlberg and Shparlinski~\cite{FKS} obtained results
on average over $q$ and $a$, and with the primes replaced by more
tractable deterministic sequences. Garaev~\cite{Ga} sharpened several
averaged variants via bounds for Kloosterman sums with prime arguments, and
Shparlinski~\cite{Sh13,Sh19} represented every class by products
of primes and almost primes in various ranges. In a wider setting,
the congruence $n_1n_2\equiv a\bmod q$ with the variables confined
to prescribed sequences, is the theory of modular hyperbolas
surveyed in~\cite{Sh}, to which our Lemma~\ref{lem:hyperbola} below
properly belongs.

Another line of inquiry retains the primes but weakens the
conclusion, covering a positive proportion of the classes by
products of two primes or all of $(\Z/q\Z)^\times$ by products of
more than two. Table~\ref{tab:results} lists the known results.
Walker's lemma~\cite{Wa} that $\cS_{q,1}(q)$ lies in no proper
coset of $(\Z/q\Z)^\times$ underlies all later work, and
in~\cite{Sz,MT,KMT} the obstacle is a quadratic character
$\psi\bmod q$ with $\psi(p)=-1$ for almost all primes $p\le q$,
as a Siegel zero would produce.

\begin{table}[ht]
\centering
{\renewcommand{\arraystretch}{1.3}%
\begin{tabular}{@{}lll@{}}
\toprule
Result & Range of $q$ & Reference \\
\midrule
$|\cS_{q,2}(q)|\ge(1/64+o(1))\,\varphi(q)$ & $q$ prime & \cite{Wa} \\
$\cS_{q,48}(q)=(\Z/q\Z)^\times$ & $q$ prime & \cite{Wa} \\
$\cS_{q,3}(q^{3/2+\eps})=(\Z/q\Z)^\times$ & $q$ large & \cite{RW,RSS,BRS} \\
$|\cS_{q,2}(q)|\ge(3/8+o(1))\,\varphi(q)$ & $q$ cube-free & \cite{Sz} \\
$\cS_{q,6}(q)=(\Z/q\Z)^\times$ & $q$ large prime & \cite{Sz} \\
$\cS_{q,3}(q^{6/5+\eps})=(\Z/q\Z)^\times$ & all $q$ & \cite{Sz} \\
$|\cS_{q,2}(q)|\ge(2/3-\eps)\,\varphi(q)$ & $q$ large & \cite{MT} \\
$\cS_{q,3}(q)=(\Z/q\Z)^\times$ & $q$ large cube-free & \cite{MT} \\
$\cS_{q,3}(q^{2+\eps})=(\Z/q\Z)^\times$ & $q$ smooth or non-exceptional prime & \cite{KMT} \\
\bottomrule\smallskip
\end{tabular}}
\caption{Products of $k$ primes in reduced classes modulo $q$;
$\eps>0$ is fixed and ``large'' means $q\ge q_0(\eps)$ where
$\eps$ appears.}
\label{tab:results}
\end{table}

Over function fields the analogous problem has been settled for large fields.
Sawin~\cite{Sa} proved square-root cancellation for
factorization functions in squarefree progressions in
$\F_{\ell}[t]$, and as Xie observes in~\cite{Xi}, the result
\cite[Lemma~9.14]{Sa} implies that every reduced class modulo a
squarefree modulus of large degree $n$ contains a product of two
monic irreducibles of degree $n$ once $\ell$ exceeds an absolute
constant; Xie's own proof, via Katz's equidistribution, needs
$\ell$ large in terms of $n$. Over number fields, Deshouillers,
Gun, Ramar\'e and Sivaraman~\cite{DGRS} represented every narrow
ray class by a product of three prime ideals of small norm, and
Xie~\cite{Xi2} extended the transference method of~\cite{MT} to
ray class groups.

\subsection{Results}
The present note replaces the primes by a random set. The
analysis of conjectures about the primes inside a probabilistic
model, as a rigorous statement about the model, goes back to
Cram\'er and has recently been carried out for prime
gaps~\cite{BFT} and for Gilbreath's conjecture~\cite{CHT}.
In this spirit, we prove the analogue of the EOS conjecture,
unconditionally and in a wider range, for Granville's random
model $\G$ of the primes~\cite{Gr}. To our knowledge, the
conjecture has not previously been studied in random sets.

The model is recalled
precisely in \S\ref{sec:model}; for the statement below it suffices to
say that $\G$ discards every integer with a prime factor
$\le T$ and includes any surviving~$n$ independently with
probability $\prod_{p\le T}(1-p^{-1})^{-1}/\log n$. The
threshold $T$ is left free in~\cite{Gr}, subject to growth at
least a fixed power of $\log u$ at scale $u$. In this paper,
we take $T=A(\euX)$ on each dyadic block $(\euX,2\euX]$, where
$\euX$ is a power of two, and $A(\euX)=(\log \euX)^{1-o(1)}$.
Our main result is the following.

\begin{theorem}\label{thm:main}
Fix $\Theta\in(\tfrac34,1]$ and a choice of the parameter $A$ in the
definition of $\G$. For each $q\ge 16$, let $\euX_q$ be the largest
power of two not exceeding $\tfrac12 q^{\Theta}$, and put
\[
c_q\defeq\frac{\varphi(q)}{q}
\sprod{p\le A(\euX_q)\\ p\,\mid\, q}(1-p^{-1})^{-2}.
\]
Then, almost surely, there is a number $q_0$ such that for every
modulus
$q\ge q_0$ and all $a\in(\Z/q\Z)^\times$, there exist distinct
$g_1,g_2\in\G\cap(\euX_q,2\euX_q]$ with
$g_1g_2\equiv a\bmod q$. Moreover,
if $N_q(a)$ denotes the number of ordered pairs of distinct
$g_1,g_2\in\G\cap(\euX_q,2\euX_q]$ satisfying that congruence,
then almost
surely we have
\[
\max_{a\in(\Z/q\Z)^\times}
\Big|\,N_q(a)\cdot\frac{q\log^2\euX_q}{c_q\,\euX_q^2}-1\Big|
\longrightarrow 0
\qquad (q\to\infty).
\]
\end{theorem}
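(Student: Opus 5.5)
Write $X=\euX_q$, $T=A(X)$, $P=\prod_{p\le T}p$, and $\lambda(n)=\prod_{p\le T}(1-p^{-1})^{-1}/\log n$. Then $\G\cap(X,2X]=\{n\in(X,2X]:(n,P)=1,\ \xi_n=1\}$, where the $\xi_n$ are independent Bernoulli$(\lambda(n))$ variables. For $a\in(\Z/q\Z)^\times$ we have
\[
N_q(a)=\sum_{\substack{n_1\ne n_2\in(X,2X]\\ n_1n_2\equiv a\bmod q\\ (n_1n_2,P)=1}}\xi_{n_1}\xi_{n_2}.
\]
The plan has three steps. First, compute $\mathbb E N_q(a)$ asymptotically. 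Second, prove concentration around the mean with failure probability small enough to survive a union bound over $a$ and over $q$. Third, apply Borel--Cantelli. Since $X\ge q^{\Theta}/4$ with $\Theta>3/4$, there are only $O(1)$ moduli $q$ attached to each dyadic $X$, but we need a union bound over all $q$.

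\textbf{Step 1: the mean.} We have
\[
\mathbb E N_q(a)=\sum\lambda(n_1)\lambda(n_2),
\]
summed over pairs with $n_1n_2\equiv a$ and $(n_1n_2,P)=1$. Since $\lambda(n)=(1+O(1/\log X))\lambda(X)$ on the block, it suffices to count lattice points on the modular hyperbola $n_1n_2\equiv a\bmod q$ in the box $(X,2X]^2$, subject to the coprimality condition $(n_1n_2,P)=1$. This is where Lemma~\ref{lem:hyperbola} enters. I expect it to give
\[
\#\{n_1,n_2\in(X,2X]:n_1n_2\equiv a\ (q),\ (n_1n_2,P)=1\}=\frac{X^2}{q}\,\frac{\varphi(q)}{q}\prod_{p\le T,\,p\nmid q}(1-p^{-1})^2\,(1+o(1)),
\]
uniformly in $a$. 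To prove it, expand the condition $(n_i,P)=1$ by Möbius over $d_i\mid P$, with $d_i$ coprime to $q$ (primes dividing $q$ are automatically excluded since $n_i$ is a unit mod $q$). Then detect $n_1n_2\equiv a$ by additive characters, completing the sums in the box. The nonzero frequencies produce Kloosterman sums, and Weil's bound gives an error $O(q^{1/2+o(1)})$ per divisor pair. Because $P$ has too many divisors, the Möbius sum must be truncated using a fundamental lemma of sieve type with level $D=X^{o(1)}$; this is harmless since $T=(\log X)^{1-o(1)}$. The main term $X^2/q$ beats the Kloosterman error $q^{1/2+\epsilon}X^{o(1)}$ as long as $X\gg q^{3/4+\epsilon}$, which is where $\Theta>3/4$ is used. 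Multiplying by $\lambda(X)^2=\prod_{p\le T}(1-p^{-1})^{-2}/\log^2X$ leaves exactly $c_qX^2/(q\log^2X)$, since the factors with $p\nmid q$ cancel and those with $p\mid q$ survive in $c_q$.

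\textbf{Step 2: concentration.} Let $\mu=\mathbb E N_q(a)\asymp X^2/(q\log^2X)\ge q^{2\Theta-1-o(1)}$, which is a positive power of $q$. $N_q(a)$ is a quadratic form in independent Bernoullis over the edges of a graph $H_a$ on $(X,2X]$ in which each $n_1$ is joined to the $n_2$ with $n_2\equiv a\bar n_1$. This graph has maximum degree $\Delta\le 2X/q+1$. Diagonal pairs ($n_1=n_2$) are excluded by hypothesis, and each vertex lies in at most $\Delta$ edges. The Kim--Vu or Janson inequality then gives
\[
\Pr\bigl(|N_q(a)-\mu|>\epsilon\mu\bigr)\le \exp\bigl(-c\,\epsilon^2\mu/(1+\Delta\lambda)\bigr).
\]
Here $\Delta\lambda\ll X/(q\log X)$, so the exponent is at least of order $X/\log X\ge q^{\Theta-o(1)}$. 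Alternatively, condition on the $\xi$ and apply Bernstein's inequality: first bound the degree sums $\sum_{n_2\sim n_1}\xi_{n_2}$, each of which is a sum of about $X/q$ independent Bernoullis with mean $\gg X/(q\log X)\ge q^{\Theta-1}$. Here lies the main obstacle. When $\Theta<1$, the quantity $X/q$ is smaller than $1$, so these neighbourhood sums are typically $0$ or $1$ and are not concentrated. The first-moment Janson bound is therefore the right tool: the variance is $\mu+\sum_{\text{edges sharing a vertex}}\lambda^3\ll\mu+X\Delta^2\lambda^3$, which is $o(\mu^2)$. For a subexponential tail I would apply Janson's inequality for the lower tail and Kim--Vu (or Warnke's bounded-degree polynomial inequality) for the upper tail, obtaining a failure probability of at most $\exp(-q^{\delta})$ for some $\delta>0$.

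\textbf{Step 3: union bound.} Sum this failure probability over $a\in(\Z/q\Z)^\times$ and over $q\ge q_0$. The resulting series $\sum_q q\exp(-q^{\delta})$ converges, so Borel--Cantelli gives the uniform asymptotic almost surely. Since $\mu\to\infty$, this also gives $N_q(a)>0$, which yields distinct $g_1,g_2$ for every class.
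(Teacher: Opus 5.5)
Your proposal is correct, but its concentration step takes a much heavier route than the paper. The mean computation is essentially the paper's: M\"obius over $d_j\mid P$ with $\gcd(d_j,q)=1$, then the hyperbola lemma with Weil--Estermann, with $\Theta>\tfrac34$ entering only there. The endgame (union bound over $a$, Borel--Cantelli over $q$) is also the same. Your plan of Janson for the lower tail and Kim--Vu or Warnke for the upper tail does work.

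The difference is that you missed the fact the paper's probabilistic step rests on. Since $2X\le q^{\Theta}\le q$, the block $(X,2X]$ consists of $X<q$ consecutive integers. So each $n_1$ has \emph{at most one} partner $n_2$ in the block with $n_1n_2\equiv a\bmod q$, and the maximum degree of $H_a$ is $1$, not $2X/q+1$. Thus $H_a$ is a matching, namely the orbits of the involution $n\mapsto a\bar n$. Distinct edges involve disjoint variables, so $N_q(a)$ is exactly twice a sum of \emph{independent} Bernoulli variables $\xi_{n_1}\xi_{n_2}$.

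The paper then gets $\Pr(N_q(a)=0)=\prod(1-\lambda(n_1)\lambda(n_2))\le e^{-\mu/2}\le e^{-\sqrt q}$ outright, the product running over edges with $(n_1n_2,P)=1$. Bennett's inequality (any Chernoff bound would do) gives two-sided deviations of size $\epsilon\mu$ with probability at most $2e^{-c\epsilon^2\mu}$. The ``main obstacle'' you identify, that neighbourhood sums are typically $0$ or $1$, is precisely what makes the problem easy.

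What your route buys is robustness. Polynomial concentration would still apply to boxes longer than $q$, where degrees are $\asymp X/q$ and genuine dependence appears. In the present range it only costs weaker tails: Kim--Vu with $E_0\asymp\mu$ and $E_1=1$ gives an upper tail of shape $\exp(-c\,\epsilon^{1/2}\mu^{1/4}+O(\log q))$. That is still summable, since $\mu\ge q^{1/2}$.

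Several side claims should be corrected, though none breaks the argument.
\begin{itemize}
\item Since $\Delta\lambda\ll X/(q\log X)=o(1)$, the exponent $\epsilon^2\mu/(1+\Delta\lambda)$ is of order $\epsilon^2\mu$, not $X/\log X$; you dropped the ``$1+$''. Since $\mu=q^{2\Theta-1+o(1)}\ge q^{1/2}$, this is still a power of $q$, which is all you need.
\item No fundamental lemma is required. $P$ has only $2^{\pi(T)}\le e^{T}=X^{o(1)}$ divisors, so the full M\"obius expansion multiplies the $q^{1/2+o(1)}$ Kloosterman error by only $q^{o(1)}$, exactly as in the paper.
\item Each dyadic $X$ serves $\asymp q$ moduli (those with $2X\le q^{\Theta}<4X$), not $O(1)$. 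Your union bound over all $q$ makes this moot.
\item Janson's inequality controls only the lower tail. The two-sided display you attribute to ``Kim--Vu or Janson'' holds here only because of the matching structure.
\item For the almost-sure limit, run Borel--Cantelli with $\epsilon=1/k$ for each $k\in\mathbf{N}$ and intersect the resulting countably many events.
\end{itemize}
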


Four comments provide context for the theorem.

First, since $(\euX_q,2\euX_q]\subseteq[1,q^{\Theta}]$, the
first assertion gives solutions to $g_1g_2\equiv a\bmod q$ with
$g_1,g_2\le q^{\Theta}$. We emphasize that $q$ ranges over
\emph{all} moduli, and that the constant~$c_q$ is the
natural local correction. 

Second, the exponent $\tfrac34$ is sharp for the method, and the
reason is a deterministic one. The proof reduces to counting lattice
points on the hyperbola $n_1n_2\equiv a\bmod q$ in boxes of side
length $N\asymp q^{\Theta}$, where completion and the Weil--Estermann
bound give an error $O(q^{1/2+o(1)})$ against a main term of size
roughly $N^2/q$; these balance at $N=q^{3/4}$. Any saving on
incomplete Kloosterman sums would lower the exponent, but below
$\Theta=\tfrac12$ the statement certainly fails.

Third, the probabilistic content of our argument is minimal.
The count is a sum of independent Bernoulli variables over orbits of
$n\mapsto a\bar n$, so the probability that there are no
representations can be precisely estimately, and Borel--Cantelli finishes
the proof. Bennett's inequality~\cite{Be} enters only for the
asymptotic count.

Finally, the choice of the model matters. The proof applies
verbatim with $T=1$, so the conclusion also holds for Cram\'er's
prime model, the constant $c_q$ degenerating to $\varphi(q)/q$;
Although the model succeeds, it does not see the bias of the
primes to small moduli. For the sieve model $\cR$ of~\cite{BFT},
in which a single uniformly random residue class $a_p \bmod p$ is deleted
for every small prime $p$, the analogue of the EOS conjecture \emph{fails}
with positive probability when $\gcd(q,6)\ne 1$.
Indeed, if $a_2\ne 0$, an event of probability
$\tfrac12$, then every element of $\cR$ is even and no reduced
class modulo an even $q$ is represented at all; a similar situation
occurs when $a_3\ne 0$. These comparisons illuminate the choice of the model:
Granville's deterministic discard step gives $\G$ the small-modulus bias of
the primes while leaving the indicators $Z_n$ independent, and
our proof uses both features.

\subsection{Conventions}

Unless indicated otherwise, summation
variables always run over the positive integers. For $y\in\R$ we write
$\eq{y}\defeq\er^{2\pi i y/q}$ and $\|y\|$ for the distance from $y$
to the nearest integer; $\tau(q)$ and $\omega(q)$ denote the number of
divisors and the number of distinct prime divisors of $q$, respectively,
and $\varphi$ is the Euler totient function. 
For a finite set $S$ we write $|S|$ for its cardinality, and for
an interval $J$ we write $|J|$ for its length. For integers $m$
coprime to $q$, $\bl m$ denotes the inverse of $m$ modulo $q$.
The indicator of an event $E$ is denoted $\ind{E}$. The notations
$F\ll G$ and $F=O(G)$ both mean that $|F|\le c\,G$ holds for some
absolute constant $c>0$; any dependence of the implied constant
is indicated explicitly.

\section{The Granville model}\label{sec:model}

Granville~\cite[pp.~23--24]{Gr} attaches to any threshold
parameter $T$ a sequence $(Z_n)_{n\ge 3}$ of independent random
variables, where $Z_n=0$ whenever $n$ has a prime factor $\le T$,
and otherwise
\[
\PP(Z_n=1)=\prod_{p\le T}\big(1-p^{-1}\big)^{-1}\cdot\frac{1}{\log n}.
\]
The random set is defined by
\[
\G\defeq\{n\ge 3:Z_n=1\}.
\]
The case $T=1$ recovers Cram\'er's model; Granville instead takes
$T$ to be at least a fixed power of $\log u$ at scale $u$.
We work with the following specification. Fix once and for all
\[
A(u)\defeq \log u\cdot\er^{-\sqrt{\log\log u}}\qquad(u\ge\er).
\]
For any power of two $\euX\ge 4$, let $T$ have the value
$A(\euX)$ over the dyadic block $(\euX,2\euX]$. Write
\[
Q_\euX\defeq\prod_{p\le A(\euX)}p,\qquad\text{so that}\quad
\frac{Q_\euX}{\varphi(Q_\euX)}
=\prod_{p\le A(\euX)}\big(1-p^{-1}\big)^{-1}.
\]
The prime number theorem and Mertens' theorem give
\be\label{eq:Qsize}
Q_\euX=\er^{(1+o(1))A(\euX)}=\euX^{o(1)},
\qquad
\frac{Q_\euX}{\varphi(Q_\euX)}\asymp\log A(\euX)\sim\log\log \euX.
\ee
On each block $(\euX,2\euX]$, those integers $n$ for which
$\gcd(n,Q_\euX)>1$
are discarded, and each surviving $n$ is included in $\G$ with
probability
\[
\euP_n\defeq\PP(Z_n=1)
=\frac{Q_\euX}{\varphi(Q_\euX)\log n},
\]
all these inclusion events being jointly independent,
within each block as well as across distinct blocks.

Fix $\Theta\in(\tfrac34,1]$. For a given modulus $q\ge 16$,
let $\euX=\euX_q$ be the largest power of two such
that $2\euX\le q^{\Theta}$; then,
\be\label{eq:xsize}
\tfrac14 q^{\Theta}<\euX\le\tfrac12 q^{\Theta}
\mand
\cI\defeq(\euX,2\euX]\subseteq[1,q^{\Theta}]\subseteq[1,q].
\ee
We abbreviate $A\defeq A(\euX_q)$, $Q\defeq Q_{\euX_q}$. Since
$|\cI|=\euX<q$,
distinct elements of $\cI$ are also distinct modulo $q$.

Now let $a\in(\Z/q\Z)^\times$. For each $n\in\cI$,
there is at most one $m\in \cI$ such that $nm\equiv a\bmod q$;
when it exists, we write $m=\iota_a(n)$, and in this case,
both $n$ and $\iota_a(n)$ are coprime to $q$. Define
\[
\cI_a\defeq\big\{n\in \cI:\iota_a(n)\text{~exists},\ \iota_a(n)\ne n\big\}.
\]
Observe that $\iota_a$ is a fixed-point-free involution of $\cI_a$.
Let $\Orb_a$ be the set of its orbits, each an
unordered pair $\{n,\iota_a(n)\}$ of distinct integers, so
$|\Orb_a|=\frac12|\cI_a|$. For $\omega=\{n,m\}\in\Orb_a$, we define
\[
Z_{\omega}\defeq Z_nZ_m=\ind{n\in\G}\,\ind{m\in\G},
\qquad
\euP_{\omega}\defeq\euP_n\euP_m.
\]
Since $n\ne m$ and neither is discarded, $Z_\omega$ is a Bernoulli
variable of parameter $\euP_{\omega}$. Since distinct orbits are
disjoint sets of integers, the family $(Z_\omega)_{\omega\in\Orb_a}$
is jointly independent. Notice that the count $N_q(a)$ defined in
Theorem~\ref{thm:main} satisfies
\be\label{eq:Nsum}
N_q(a)=\big|\big\{(g_1,g_2)\in(\G\cap \cI)^2:
g_1g_2\equiv a\bmod q,\ g_1\ne g_2\big\}\big|
=2\sum_{\omega\in\Orb_a}Z_{\omega}.
\ee

\section{Lattice points on the modular hyperbola}\label{sec:hyperbola}

Our solitary arithmetic input is the following classical estimate; we
include a proof for completeness. For an extensive account
of such results, see Shparlinski~\cite{Sh}.

\begin{lemma}\label{lem:hyperbola}
Let $q\ge 2$ and $b\in(\Z/q\Z)^\times$,
and let $\cJ_1,\cJ_2$ be real intervals of length at most~$q$. Then
\[
\big|\big\{(m_1,m_2)\in(\cJ_1\times \cJ_2)\cap\Z^2:
m_1m_2\equiv b\bmod q\big\}\big|
=\frac{\varphi(q)}{q^2}\,|\cJ_1||\cJ_2|
+O\big(\tau(q)^2q^{1/2}\log^2q\big),
\]
uniformly for $q$, $b$, and the intervals $\cJ_j$.
\end{lemma}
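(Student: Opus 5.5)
We count by completing the sum: detect each interval by additive characters and the hyperbola by summing over $m_1$ coprime to $q$. Concretely, the count equals
\[
\sum_{\substack{m_1\in\cJ_1\\ \gcd(m_1,q)=1}}\ \sum_{\substack{m_2\in\cJ_2\\ m_2\equiv b\bl m_1\bmod q}}1 .
\]
Since $|\cJ_j|\le q$, each interval is covered by at most two full periods modulo $q$. So we may split each $\cJ_j$ into $O(1)$ subintervals, each of which maps injectively into $\Z/q\Z$, and treat the pieces separately. For such a piece we write the indicator of $m_2$ lying in a set $J\subseteq\Z/q\Z$ (the image of an interval) via
\[
\ind{x\in J}=\frac1q\sum_{h\bmod q}\widehat{J}(h)\,\eq{-hx},\qquad \widehat J(h)=\sum_{y\in J}\eq{hy}.
\]
Doing the same for $m_1$ gives the count as
\[
\frac{1}{q^2}\sum_{h_1,h_2\bmod q}\widehat{J_1}(h_1)\widehat{J_2}(h_2)\,K(-h_1,-h_2;q),\qquad K(u,v;q)=\sideset{}{^*}\sum_{x\bmod q}\eq{ux+vb\bl x}.
\]

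The term $h_1=h_2=0$ gives $\varphi(q)|J_1||J_2|/q^2$. This equals the stated main term up to $O(1)\cdot\varphi(q)/q\ll1$ when we pass from lattice counts to lengths, which is negligible. When exactly one of $h_1,h_2$ is zero, $K$ is a Ramanujan sum $c_q(h)$, bounded by $\gcd(h,q)$. For $h_1h_2\not\equiv0$ we use the Weil--Estermann bound
\[
|K(u,v;q)|\le\tau(q)\,\gcd(u,v,q)^{1/2}q^{1/2}.
\]
For $b$ coprime to $q$, $\gcd(u,vb,q)=\gcd(u,v,q)$. We also use the geometric series bound $|\widehat J(h)|\ll\min(|J|,\|h/q\|^{-1})$.

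The main obstacle is summing the gcd factors without losing more than $\tau(q)^2\log^2q$. For the doubly nonzero terms the error is
\[
\ll\frac{\tau(q)q^{1/2}}{q^2}\sum_{h_1,h_2\ne0}\frac{q}{|h_1|}\frac{q}{|h_2|}\gcd(h_1,h_2,q)^{1/2}.
\]
We bound $\gcd(h_1,h_2,q)^{1/2}\le\sum_{d\mid \gcd(h_1,h_2,q)}d^{1/2}$ and interchange summations. Writing $h_j=dk_j$ yields
\[
\tau(q)q^{1/2}\sum_{d\mid q}d^{1/2}d^{-2}\Big(\sum_{|k|\le q}\frac1{|k|}\Big)^2\ll\tau(q)q^{1/2}\log^2q,
\]
which is well within the allowed $\tau(q)^2q^{1/2}\log^2q$. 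For the terms with one frequency zero, the contribution is $\ll q^{-2}\,|J_1|\sum_{h\ne0}(q/|h|)\gcd(h,q)$. Using $\gcd(h,q)\le\sum_{d\mid\gcd}d$ in the same way bounds this by $\ll q^{-1}|J_1|\tau(q)\log q\ll\tau(q)\log q$.

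Summing over the $O(1)$ pieces finishes the proof, uniformly in $q$, $b$ and the intervals. The extra factor of $\tau(q)$ in the statement gives room for any cruder handling of the divisor sums. If needed, one can instead use $\gcd(h_1,h_2,q)^{1/2}\le\gcd(h_1,q)^{1/2}$ and separate the two sums, which costs at most one more $\tau(q)$.
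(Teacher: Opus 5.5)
Your proof is correct and is essentially the paper's argument. The paper detects $m_2\equiv b\bar m_1 \pmod q$ with additive characters and then completes the incomplete Kloosterman sum in $m_1$, which yields exactly your double sum $q^{-2}\sum_{h_1,h_2}\widehat{J_1}(h_1)\widehat{J_2}(h_2)K(-h_1,-h_2;q)$, bounded with the Weil--Estermann bound and the geometric-series bound. Your bookkeeping is slightly sharper: you use Ramanujan-sum bounds for the terms with one zero frequency, where the paper uses the weaker Weil--Estermann bound, and you keep the joint $\gcd(h_1,h_2,q)$ and sum it via $\sum_{d\mid q}d^{-3/2}\ll 1$, where the paper uses $\gcd(u,v,q)\le\gcd(v,q)$. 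You therefore get the error $O(\tau(q)q^{1/2}\log^2 q)$, one factor of $\tau(q)$ better than stated, and your splitting into injective pieces is harmless but unnecessary.
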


\begin{proof}
For $j=1$ or $2$, we set
\[
S_j(\ell)\defeq\sum_{m\in \cJ_j\cap\Z}\eq{\ell m}\qquad(\ell\in\Z).
\]
Then $S_j(\ell)$ depends only on the residue class of $\ell$
modulo $q$, and $S_j(0)=|\cJ_j|+O(1)$. For $q\nmid\ell$, summing
the geometric series yields
\be\label{eq:geom}
|S_j(\ell)|\le\tfrac12\|\ell/q\|^{-1},\qquad\text{and so}\quad
\sum_{\ell=1}^{q-1}|S_j(\ell)|\ll q\log q.
\ee

Let $J$ denote the count in the lemma. Since $\gcd(b,q)=1$, every
pair $(m_1,m_2)$ counted by $J$ has $\gcd(m_1,q)=1$,
and $m_2$ is uniquely determined by $m_2\equiv b\,\bl m_1\bmod q$
since $|\cJ_2|\le q$. Using additive characters 
detect the latter congruence, we have
\dalign{
J&=\ssum{m_1\in\cJ_1\cap\Z\\\gcd(m_1,q)=1}\sum_{m_2\in \cJ_2\cap\Z}~\cdot~
\frac{1}{q}\sum_{\ell\in\Z/q\Z}\eq{\ell(m_2-b\,\bl m_1)}\\
&=\frac{1}{q}\sum_{\ell\in\Z/q\Z}S_2(\ell)
\ssum{m_1\in\cJ_1\cap\Z\\\gcd(m_1,q)=1}\eq{-\ell\,b\,\bl m_1}
=\frac{\Phi}{q}\,S_2(0)+\frac1q\sum_{\ell=1}^{q-1}S_2(\ell)\,{\widetilde S}(\ell),
}
where
\[
\Phi\defeq\big|\{m\in \cJ_1\cap\Z:\gcd(m,q)=1\}\big|
\mand
{\widetilde S}(\ell)\defeq\ssum{m\in \cJ_1\cap\Z\\ \gcd(m,q)=1}\eq{-\ell\,b\,\bl m}.
\]
M\"obius inversion leads to the estimate
\[
\Phi=\sum_{d\,\mid\, q}\mu(d)\Big(\frac{|\cJ_1|}{d}+O(1)\Big)
=\frac{\varphi(q)}{q}\,|\cJ_1|+O(\tau(q)),
\]
and since $S_2(0)=|\cJ_2|+O(1)$ and $|\cJ_j|\le q$, the main term satisfies
\[
\frac{\Phi}{q}\,S_2(0)
=\frac{\varphi(q)}{q^2}\,|\cJ_1||\cJ_2|+O(\tau(q)).
\]
To complete the proof, it remains to show that
\be\label{eq:showthis}
\frac1q\sum_{\ell=1}^{q-1}\big|S_2(\ell)\big|\,\big|{\widetilde S}(\ell)\big|
\ll\tau(q)^2\,q^{1/2}\log^2q.
\ee

We complete the sum ${\widetilde S}(\ell)$ in a standard way,
as follows. Fix $\ell$ with $1\le \ell\le q-1$, and put
$u\defeq-\ell\,b$, which satisfies $\gcd(u,q)=\gcd(\ell,q)$. Writing
\[
K(u,v;q)\defeq\ssum{r\in\Z/q\Z\\ \gcd(r,q)=1}\eq{u\bl r+vr}
\qquad(v\in\Z/q\Z)
\]
for the Kloosterman sum, the orthogonality of the additive characters
yields
\[
\frac1q\sum_{v\in\Z/q\Z}\eq{-vm}K(u,v;q)
=\ssum{r\in\Z/q\Z\\ \gcd(r,q)=1}\eq{u\bl r}\ind{r\equiv m\bmod q}
=\begin{cases}
\eq{u\bl m}&\quad\hbox{if $\gcd(m,q)=1$},\\
0&\quad\hbox{otherwise}.
\end{cases}
\]
Summing over all $m\in \cJ_1\cap\Z$, we conclude that
\be\label{eq:cmpl}
{\widetilde S}(\ell)=\frac1q\sum_{v\in\Z/q\Z}S_1(-v)\,K(u,v;q).
\ee
By the Weil--Estermann bound~\cite{Es} (see
also~\cite[Cor.~11.12]{IK}), the Kloosterman sum satisfies
\be\label{eq:kloostbd}
|K(u,v;q)|\le\tau(q)\gcd(u,v,q)^{1/2}q^{1/2}.
\ee
We also record the elementary bound
\be\label{eq:elembd}
\sum_{v=1}^{q-1}\|v/q\|^{-1}\gcd(v,q)^{1/2}
\ll\tau(q)\,q\log q.
\ee
In the expression \eqref{eq:cmpl}, the term $v=0$ contributes
\[
\frac1qS_1(0)\,K(u,0;q)\ll\tau(q)\gcd(\ell,q)^{1/2}q^{1/2}
\]
by \eqref{eq:kloostbd} and the trivial bound $|S_1(0)|\le q+1$,
since $\gcd(u,0,q)=\gcd(\ell,q)$. Using \eqref{eq:geom}
and~\eqref{eq:kloostbd}, the bound $\gcd(u,v,q)\le\gcd(v,q)$,
and then \eqref{eq:elembd}, the terms $v\ne 0$ contribute
\dalign{
\frac1q\sum_{v=1}^{q-1}S_1(-v)\,K(u,v;q)
&\ll\frac1q\sum_{v=1}^{q-1}
\|v/q\|^{-1}\tau(q)\gcd(u,v,q)^{1/2}q^{1/2}\\
&\ll\frac{\tau(q)}{q^{1/2}}\sum_{v=1}^{q-1}
\|v/q\|^{-1}\gcd(v,q)^{1/2}
\ll\tau(q)^2q^{1/2}\log q.
}
Altogether, this shows that
\[
|{\widetilde S}(\ell)|\ll\tau(q)\gcd(\ell,q)^{1/2}q^{1/2}
+\tau(q)^2\,q^{1/2}\log q.
\]
Consequently,
\dalign{
\frac1q\sum_{\ell=1}^{q-1}|S_2(\ell)|\,|{\widetilde S}(\ell)|
&\ll\frac{\tau(q)}{q^{1/2}}\sum_{\ell=1}^{q-1}
|S_2(\ell)|\gcd(\ell,q)^{1/2}
+\frac{\tau(q)^2\log q}{q^{1/2}}\sum_{\ell=1}^{q-1}|S_2(\ell)|,
}
and the required bound \eqref{eq:showthis} follows by applying
\eqref{eq:geom} and \eqref{eq:elembd}.
\end{proof}

\section{The expected number of representations}\label{sec:mean}

Throughout this section, $q$ denotes a given modulus, and we
abbreviate
\[
\euX\defeq\euX_q,\qquad
\cI\defeq(\euX,2\euX],\qquad
A\defeq A(\euX),\qquad
Q\defeq Q_\euX.
\]
\begin{proposition}\label{prop:count}
Uniformly for $q\ge 16$ and $a\in(\Z/q\Z)^\times$,
\[
R(a)\defeq\big|\big\{(n_1,n_2)\in \cI^2:
n_1n_2\equiv a\bmod q,\ \gcd(n_1n_2,Q)=1\big\}\big|
\]
satisfies the estimate
\[
R(a)=\kappa_q\,\frac{\varphi(q)}{q^2}\,\euX^2+O(q^{1/2+o(1)}),
\qquad\text{where}\quad
\kappa_q\defeq\sprod{p\le A\\p\,\nmid\,q}(1-p^{-1})^2.
\]
\end{proposition}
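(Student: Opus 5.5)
We sieve out the small primes by M\"obius inversion over divisors of the part of $Q$ coprime to $q$, and then apply Lemma~\ref{lem:hyperbola} to each resulting sublattice count. Write $Q'\defeq\prod_{p\le A,\,p\nmid q}p$. If $n_1n_2\equiv a\bmod q$ with $\gcd(a,q)=1$, then automatically $\gcd(n_1n_2,q)=1$. Hence the condition $\gcd(n_1n_2,Q)=1$ is equivalent to $\gcd(n_1,Q')=\gcd(n_2,Q')=1$. Detecting each of these coprimality conditions by $\sum_{d\mid\gcd(n,Q')}\mu(d)$ gives
\[
R(a)=\sum_{d_1,d_2\mid Q'}\mu(d_1)\mu(d_2)\,
\big|\{(m_1,m_2):d_jm_j\in\cI,\ d_1d_2m_1m_2\equiv a\bmod q\}\big|.
\]
Since $\gcd(d_1d_2,q)=1$, the inner congruence reads $m_1m_2\equiv b\bmod q$ with $b=a\,\overline{d_1d_2}$, a reduced class. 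The variables $m_j$ run over the intervals $\cJ_j=(\euX/d_j,2\euX/d_j]$, of length $\euX/d_j\le q$.

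Lemma~\ref{lem:hyperbola} then evaluates each inner count as
\[
\frac{\varphi(q)}{q^2}\frac{\euX^2}{d_1d_2}+O\big(\tau(q)^2q^{1/2}\log^2 q\big).
\]
Summing the main terms gives
\[
\frac{\varphi(q)}{q^2}\euX^2\Big(\sum_{d\mid Q'}\mu(d)/d\Big)^2=\kappa_q\frac{\varphi(q)}{q^2}\euX^2,
\]
exactly as claimed, with no truncation needed.

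The main obstacle is the accumulated error, which is $\tau(Q')^2\tau(q)^2q^{1/2}\log^2q$. Here $\tau(q)=q^{o(1)}$ and $\log^2q=q^{o(1)}$, so everything reduces to controlling $\tau(Q')\le 2^{\pi(A)}$. Since $A=A(\euX)\le\log\euX\le\log q$, we have $\pi(A)\ll \log q/\log\log q$, and therefore $2^{2\pi(A)}=q^{O(1/\log\log q)}=q^{o(1)}$. This is exactly where the choice of threshold $T=A(\euX)=(\log\euX)^{1-o(1)}$ is used: a threshold larger than a small multiple of $\log q$ would make the divisor sum over $Q'$ too costly. Even so, the crude bound $\tau(Q')\le 2^{\pi(A)}$ suffices, so no fundamental lemma of sieve theory is needed.

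Combining the main and error terms yields $R(a)=\kappa_q\frac{\varphi(q)}{q^2}\euX^2+O(q^{1/2+o(1)})$. Every bound used is uniform in $a$ and in $q\ge16$, so the estimate holds uniformly as stated.
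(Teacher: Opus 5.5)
Your proof is correct and is essentially the paper's argument. The paper expands over all $d_1,d_2\mid Q$ and notes that terms with $\gcd(d_1d_2,q)>1$ vanish, which amounts to your restriction to $Q'$; it then applies Lemma~\ref{lem:hyperbola} with $b=a\,\overline{d_1d_2}$ and bounds the number of pairs by $4^{\pi(A)}\le\er^{2A}=\euX^{o(1)}$, where you instead use $\pi(A)\ll A/\log A$. One remark in your commentary is overstated, though the proof never uses it: your own bound gives $4^{\pi(A)}=q^{o(1)}$ for any threshold $A=o(\log q\log\log q)$, so a threshold equal to a fixed multiple of $\log q$ would not make the divisor sum too costly.
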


\begin{proof}
Expanding
$\ind{\gcd(n_j,Q)=1}=\sum_{d_j\mid\,\gcd(n_j,Q)}\mu(d_j)$ for
$j=1,2$, we have
\[
R(a)=\sum_{d_1,d_2\,\mid\,Q}\mu(d_1)\mu(d_2)\,T(d_1,d_2),
\]
where
\[
T(d_1,d_2)\defeq\big|\big\{(n_1,n_2)\in \cI^2:
n_1n_2\equiv a\bmod q,\ d_j\mid n_j\big\}\big|.
\]
If a prime $p$ were to divide $\gcd(d_1d_2,q)$, then any pair
counted by
$T(d_1,d_2)$ would have both $p\mid n_1n_2$ and $p\mid n_1n_2-a$,
forcing $p\mid a$, which is not possible since $\gcd(a,q)=1$.
Hence $T(d_1,d_2)=0$ unless $\gcd(d_1d_2,q)=1$. In the latter case,
substituting $n_j=d_jm_j$ transforms the count into that studied in
Lemma~\ref{lem:hyperbola} with intervals
$\cJ_j=(\euX/d_j,2\euX/d_j]$ of
lengths $\euX/d_j\le q$ and residue $b=a\,\overline{d_1d_2}$, which is
coprime to $q$. Applying the lemma, we get  
\[
T(d_1,d_2)=\frac{\varphi(q)}{q^2}\cdot\frac{\euX^2}{d_1d_2}
+O\big(\tau(q)^2q^{1/2}\log^2 q\big).
\]
The divisors $d_j$ of $Q$ are squarefree with all prime factors at
most $A$, so the number of pairs $(d_1,d_2)$ is at most
$4^{\pi(A)}\le\er^{2A}=\euX^{o(1)}\le q^{o(1)}$
by \eqref{eq:Qsize}, and $\tau(q)^2\log^2 q=q^{o(1)}$, so
the total contribution of the error terms is $O(q^{1/2+o(1)})$.
The main terms sum to
\[
\frac{\varphi(q)}{q^2}\,\euX^2
\bigg\{\ssum{d\,\mid\,Q\\ \gcd(d,q)=1}\frac{\mu(d)}{d}\bigg\}^{2}
=\frac{\varphi(q)}{q^2}\,\euX^2
\sprod{p\le A\\ p\,\nmid\,q}(1-p^{-1})^{2}
=\kappa_q\,\frac{\varphi(q)}{q^2}\,\euX^2,
\]
and the proposition is proved.
\end{proof}

The next corollary evaluates the expectation $\EE N_q(a)$. By
\eqref{eq:Nsum}, the count $N_q(a)$ introduced in
Theorem~\ref{thm:main} is twice the sum over $\omega\in\Orb_a$
of the jointly independent Bernoulli variables $Z_\omega$ with
parameters $\euP_\omega$; taking expectations, we have
\be\label{eq:Esum}
\EE N_q(a)=2\sum_{\omega\in\Orb_a}\euP_{\omega}.
\ee

\begin{corollary}\label{cor:mean}
With notation as in Theorem~\ref{thm:main}, the estimate
\[
\EE N_q(a)=\frac{c_q\,\euX_q^2}{q\log^2 \euX_q}
\big(1+O\big((\log q)^{-1}\big)\big)
\]
holds uniformly for $q\ge 16$ and $a\in(\Z/q\Z)^\times$.
In particular, there exists $q_1=q_1(\Theta,A)$ such that
$\EE N_q(a)\ge 2q^{1/2}$ for all $q\ge q_1$ and $a\in(\Z/q\Z)^\times$.
\end{corollary}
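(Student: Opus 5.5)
Starting from \eqref{eq:Esum}, we will rewrite the sum over orbits as a sum over ordered pairs. Each orbit $\omega=\{n,m\}$ contributes $2\euP_\omega=\euP_n\euP_m+\euP_m\euP_n$. Recall that $\euP_n=0$ when $\gcd(n,Q)>1$ and $\euP_n=\frac{Q}{\varphi(Q)}(\log n)^{-1}$ otherwise. Hence
\[
\EE N_q(a)=\ssum{(n_1,n_2)\in\cI^2,\ n_1\ne n_2\\ n_1n_2\equiv a\bmod q,\ \gcd(n_1n_2,Q)=1}\Big(\frac{Q}{\varphi(Q)}\Big)^2\frac{1}{\log n_1\log n_2}.
\]
For $n\in\cI$ we have $\log n=\log\euX+O(1)=\log\euX\,(1+O(1/\log q))$, because $\log\euX\asymp\log q$ by \eqref{eq:xsize}. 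So up to a factor $1+O(1/\log q)$ the weight is the constant $(Q/\varphi(Q))^2\log^{-2}\euX$. The sum then becomes $R(a)$ minus the diagonal pairs $n_1=n_2$ with $n^2\equiv a\bmod q$. Since $|\cI|<q$, a given square class has at most $q^{o(1)}$ roots in $\cI$; more crudely, the number of square roots of $a$ modulo $q$ is $\le 2^{\omega(q)+1}=q^{o(1)}$. The diagonal therefore contributes $O(q^{o(1)})$.

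Next we apply Proposition~\ref{prop:count}. This gives
\[
\EE N_q(a)=\frac{Q^2}{\varphi(Q)^2\log^2\euX}\Big(\kappa_q\frac{\varphi(q)}{q^2}\euX^2+O(q^{1/2+o(1)})\Big)\big(1+O(1/\log q)\big).
\]
The constant is identified by
\[
\frac{Q^2}{\varphi(Q)^2}\kappa_q=\prod_{p\le A}(1-p^{-1})^{-2}\prod_{p\le A,\,p\nmid q}(1-p^{-1})^2=\prod_{p\le A,\,p\mid q}(1-p^{-1})^{-2}.
\]
Multiplied by $\varphi(q)/q$, this is exactly $c_q$, so the main term is $c_q\euX^2/(q\log^2\euX)$.

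It remains to check that the error terms are $O(1/\log q)$ relative to the main term, and this is the only place where $\Theta>\tfrac34$ enters. We need a lower bound for $c_q$. Since $c_q\ge\varphi(q)/q\gg1/\log\log q$, and $\euX^2\gg q^{2\Theta}$, the main term is $\gg q^{2\Theta-1-o(1)}$. The error is $(Q/\varphi(Q))^2\log^{-2}\euX\cdot q^{1/2+o(1)}=q^{1/2+o(1)}$. The ratio is therefore $q^{3/2-2\Theta+o(1)}$, which is a negative power of $q$ because $2\Theta>3/2$, and so it is certainly $O(1/\log q)$. The diagonal term $q^{o(1)}$ is smaller still. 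We must also make sure that the $o(1)$ exponents are uniform in $q$ and $a$. They come from $\tau(q)$, $\log q$, $4^{\pi(A)}\le e^{2A}$ and $2^{\omega(q)}$, all of which are $q^{o(1)}$ uniformly, so this holds.

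For the final assertion, we combine the bounds. For large $q$,
\[
\EE N_q(a)\ge\tfrac12 c_q\euX^2/(q\log^2\euX)\gg q^{2\Theta-1}/(\log\log q\,\log^2q).
\]
Since $2\Theta-1>\tfrac12$, this exceeds $2q^{1/2}$ for all $q\ge q_1(\Theta,A)$. The main obstacle is simply keeping the $q^{o(1)}$ losses uniform. With Proposition~\ref{prop:count} in hand, everything else is bookkeeping.
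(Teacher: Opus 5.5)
Your proposal is correct and follows essentially the same route as the paper: subtract the $O(q^{o(1)})$ diagonal square roots from $R(a)$, replace each weight by $Q^2/(\varphi(Q)^2\log^2\euX)$ up to a factor $1+O(1/\log q)$, apply Proposition~\ref{prop:count}, identify $c_q$, and use $2\Theta-1>\tfrac12$ to absorb the $q^{1/2+o(1)}$ error. Your explicit convention that $\euP_n=0$ for discarded $n$ is, if anything, slightly cleaner than the paper's identification $|\Orb_a|=\tfrac12R(a)+O(q^{o(1)})$, which tacitly restricts to orbits consisting of surviving integers.
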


\begin{proof}
The ordered pairs counted by $R(a)$ with $n_1=n_2$ satisfy
$n_1^2\equiv a\bmod q$. Since $\euX\le q$, their number is at most the
number of square roots of $a$ modulo $q$, which is at most
$2^{\omega(q)+1}=q^{o(1)}$. Removing them, the remaining pairs are
the ordered versions of the orbits in $\Orb_a$; thus,
$|\Orb_a|=\tfrac12R(a)+O(q^{o(1)})$. For each $n\in \cI$, we have
$\log n=\log\euX+O(1)$; it follows that
\[
\euP_{\omega}=\frac{Q^2}{\varphi(Q)^2\log^2\euX}
\big(1+O\big((\log\euX)^{-1}\big)\big)
\]
for every $\omega\in\Orb_a$. Recalling \eqref{eq:Esum} and
applying Proposition~\ref{prop:count}, we derive the estimate
\[
\EE N_q(a)=\frac{Q^2}{\varphi(Q)^2\log^2\euX}
\bigg\{\kappa_q\,\frac{\varphi(q)}{q^2}\,\euX^2
+O\big(q^{1/2+o(1)}\big)\bigg\}
\big(1+O\big((\log\euX)^{-1}\big)\big).
\]
Now, since
\[
\frac{Q^2}{\varphi(Q)^2}\,\kappa_q
=\sprod{p\le A\\p\,\mid\,q}(1-p^{-1})^{-2},
\]
and $\log\euX\asymp\log q$, the main term of $\EE N_q(a)$ equals
\[
\frac{c_q\,\euX^2}{q\log^2 \euX}
\big(1+O\big((\log q)^{-1}\big)\big),
\]
and the error term is $q^{1/2+o(1)}$ by \eqref{eq:Qsize}. Since
$c_q\ge\varphi(q)/q\gg(\log\log 3q)^{-1}$ and
$\euX>\tfrac14q^{\Theta}$ by
\eqref{eq:xsize}, the main term is at least $q^{2\Theta-1-o(1)}$.
As $2\Theta-1>\tfrac12$, the main term dominates the error term,
and both assertions of the corollary follow.
\end{proof}

Corollary~\ref{cor:mean} is the only place where the hypothesis
$\Theta>3/4$ is used.

\section{Proof of Theorem~\ref{thm:main}}\label{sec:proof}

Both assertions rest on the independence of the orbit variables
$(Z_\omega)_{\omega\in\Orb_a}$. The first follows from the exact
product formula \eqref{eq:exact} for $\PP(N_q(a)=0)$ together with the
Borel--Cantelli lemma; the second combines Bennett's inequality
with another application of Borel--Cantelli.

Let $q\ge q_1$ and $a\in(\Z/q\Z)^\times$. By \eqref{eq:Nsum} and the
independence of the Bernoulli family $(Z_\omega)_{\omega\in\Orb_a}$,
we have
\be\label{eq:exact}
\PP\big(N_q(a)=0\big)=\prod_{\omega\in\Orb_a}\big(1-\euP_{\omega}\big)
\le\exp\Big(-\sum_{\omega\in\Orb_a}\euP_{\omega}\Big)
=\exp\big(-\tfrac12\EE N_q(a)\big)\le\er^{-\sqrt q},
\ee
by Corollary~\ref{cor:mean}. Consequently, for the event
\[
E_q\defeq\big\{N_q(a)=0\text{~for some~}a\in(\Z/q\Z)^\times\big\},
\]
the union bound gives
\[
\sum_{q\ge q_1}\PP(E_q)\le\sum_{q\ge q_1}q\,\er^{-\sqrt q}<\infty,
\]
and the Borel--Cantelli lemma yields the first assertion of the
theorem.

For the second assertion, we begin by proving that
\be\label{eq:outrageous}
\PP\Big(\big|N_q(a)-\EE N_q(a)\big|\ge\delta\,\EE N_q(a)\Big)
\le 2\er^{-\frac13\delta^2\sqrt{q}}
\ee
uniformly for $q\ge q_1$, $a\in(\Z/q\Z)^\times$, and
$0<\delta\le 1$. For this, we use a classical
concentration inequality; see also~\cite[Lemma~3.3]{BFT}.

\begin{lemma}[Bennett's inequality \cite{Be}]\label{lem:Bennett-ineq}
Suppose that $X_1,\ldots,X_n$ are independent random variables
satisfying $\EE X_j=0$ and $|X_j|\le M$ almost surely for each
$j$, where $M>0$. Then
\[
\PP\bigg(\bigg|\sum_{1\le j\le n} X_j\bigg| \ge t\bigg)
\le 2\exp\Big\{-\frac{\sigma^2}{M^2}\,
\sL\Big(\frac{Mt}{\sigma^2}\Big)\Big\}\qquad(t>0),
\]
where $\sigma^2\defeq\sum_j \VV X_j$ and
\[
\sL(u)\defeq\int_1^{1+u}\log t\,dt=(1+u)\log(1+u)-u.
\]
\end{lemma}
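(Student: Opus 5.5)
The plan is the standard Chernoff (exponential moment) argument: bound the moment generating function of each $X_j$ using only $\EE X_j=0$, $|X_j|\le M$ and $\VV X_j$, then optimize over the exponential parameter. It suffices to prove the one-sided bound
\[
\PP\Big(\sum_j X_j\ge t\Big)\le\exp\Big\{-\frac{\sigma^2}{M^2}\,\sL\Big(\frac{Mt}{\sigma^2}\Big)\Big\}.
\]
The variables $-X_1,\ldots,-X_n$ satisfy the same hypotheses with the same $\sigma^2$ and $M$, so applying the one-sided bound to them handles the lower tail. A union bound then produces the factor $2$. We may assume $\sigma^2>0$; otherwise every $X_j=0$ almost surely and the probability vanishes.

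The key step is a single-variable estimate. Put $h(x)\defeq(\er^x-1-x)/x^2$ for $x\ne0$ and $h(0)\defeq\tfrac12$. The power series $h(x)=\sum_{k\ge0}x^k/(k+2)!$ shows that $h$ is continuous, and $h$ is nondecreasing on $\R$; one can verify this by differentiating, or by writing $h(x)=\int_0^1(1-s)\er^{sx}\,ds$, which is visibly increasing in $x$. For $\lambda>0$ we have $\lambda X_j\le\lambda M$ almost surely, and therefore
\[
\er^{\lambda X_j}=1+\lambda X_j+\lambda^2X_j^2\,h(\lambda X_j)\le1+\lambda X_j+X_j^2\,\frac{\er^{\lambda M}-1-\lambda M}{M^2}.
\]
Taking expectations and using $\EE X_j=0$ and $1+y\le\er^y$ gives
\[
\EE\,\er^{\lambda X_j}\le\exp\Big(\VV X_j\cdot\frac{\er^{\lambda M}-1-\lambda M}{M^2}\Big).
\]

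By independence, the Markov bound applied to $\er^{\lambda\sum X_j}$ yields
\[
\PP\Big(\sum_jX_j\ge t\Big)\le\exp\Big(-\lambda t+\frac{\sigma^2}{M^2}\big(\er^{\lambda M}-1-\lambda M\big)\Big)\qquad(\lambda>0).
\]
We then minimize the exponent in $\lambda$. Its derivative vanishes when $\er^{\lambda M}=1+Mt/\sigma^2$, that is, at $\lambda=M^{-1}\log(1+u)$ with $u\defeq Mt/\sigma^2>0$. Substituting this value, the exponent becomes
\[
-\frac{\sigma^2}{M^2}\Big(u\log(1+u)-u+\log(1+u)\Big)=-\frac{\sigma^2}{M^2}\big((1+u)\log(1+u)-u\big)=-\frac{\sigma^2}{M^2}\sL(u).
\]
This is exactly the claimed bound. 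The identity $\int_1^{1+u}\log t\,dt=(1+u)\log(1+u)-u$ is immediate from integration by parts.

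The only point needing genuine care is the monotonicity of $h$, which is what allows the single bound $h(\lambda X_j)\le h(\lambda M)$ to be used uniformly, including for negative values of $X_j$. The integral representation of $h$ settles it cleanly, so no real obstacle is expected. The rest is routine algebra.
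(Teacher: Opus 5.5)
Your Chernoff argument is correct and complete. It is the standard proof of Bennett's inequality: monotonicity of $(e^x-1-x)/x^2$ via its integral representation, the resulting bound on each moment generating function, the choice $\lambda=M^{-1}\log(1+Mt/\sigma^2)$, and the reflection $X_j\mapsto -X_j$ for the factor $2$. The paper gives no proof of its own here; it simply quotes the inequality from \cite{Be} (see also \cite[Lemma~3.3]{BFT}), so there is no different route to compare against.
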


We apply the lemma to the random variables
$X_\omega\defeq Z_\omega-\euP_{\omega}$, $\omega\in\Orb_a$, which
are independent, have mean zero, and satisfy $|X_\omega|\le 1$;
thus $M=1$ and, by \eqref{eq:Esum},
\[
\sigma^2=\sum_{\omega\in\Orb_a}\VV X_\omega
=\sum_{\omega\in\Orb_a}\VV Z_\omega
=\sum_{\omega\in\Orb_a}(\euP_{\omega}-\euP_{\omega}^2)
\le\tfrac12\,\EE N_q(a)\eqdef\mu.
\]
If $\sigma^2=0$, the count $N_q(a)$ is deterministic,
so $N_q(a)-\EE N_q(a)=0$, but $\EE N_q(a)>0$
by Corollary~\ref{cor:mean}, so \eqref{eq:outrageous}
holds automatically in this case.

Now suppose that $\sigma^2>0$. Fix $\delta\in(0,1]$ and take 
$t\defeq\delta\mu$. Since
\[
N_q(a)-\EE N_q(a)=2\sum_{\omega\in\Orb_a}X_\omega
\]
by \eqref{eq:Nsum} and \eqref{eq:Esum},
Lemma~\ref{lem:Bennett-ineq} (with $M=1$) gives
\[
\PP\Big(\big|N_q(a)-\EE N_q(a)\big|\ge\delta\,\EE N_q(a)\Big)
\le 2\exp\Big\{-\sigma^2
\sL\Big(\frac{\delta\mu}{\sigma^2}\Big)\Big\}.
\]
The function
$v\mapsto v\,\sL(\delta\mu/v)$ is nonincreasing on $(0,\infty)$,
since its derivative equals
$\log(1+u)-u\le 0$ with $u\defeq\delta\mu/v$. Hence,
as $0<\sigma^2\le\mu$, we have
$\sigma^2\sL(\delta\mu/\sigma^2)\ge\mu\,\sL(\delta)$. Moreover,
$\mu\ge q^{1/2}$ for $q\ge q_1$ by Corollary~\ref{cor:mean},
and 
\[
\sL(\delta)=(1+\delta)\log(1+\delta)-\delta\ge\tfrac13\delta^2
\qquad(0<\delta\le 1),
\]
thus
\[
\exp\Big\{-\sigma^2
\sL\Big(\frac{\delta\mu}{\sigma^2}\Big)\Big\}
\le\exp\{-\mu\sL(\delta)\}\le \er^{-\frac13\delta^2\sqrt{q}},
\]
giving \eqref{eq:outrageous} in this case as well.

Fix $k\in\N$ and take $\delta=1/k$. Summing \eqref{eq:outrageous} over
all $q\ge q_1$ and $a\in(\Z/q\Z)^\times$ gives a convergent
series, so the Borel--Cantelli lemma shows that, almost surely,
\[
\max_{a\in(\Z/q\Z)^\times}
\Big|\frac{N_q(a)}{\EE N_q(a)}-1\Big|\le\frac1k
\qquad\text{for all sufficiently large }q.
\]
Intersecting these almost-sure events over all $k\in\N$,
we conclude that, almost surely,
\[
\max_{a\in(\Z/q\Z)^\times}
\Big|\frac{N_q(a)}{\EE N_q(a)}-1\Big|
\longrightarrow 0
\qquad(q\to\infty).
\]
Since Corollary~\ref{cor:mean} gives
$\EE N_q(a)=(1+O(1/\log q))\,c_q\euX_q^2/(q\log^2\euX_q)$
uniformly in $a$, the second assertion of the theorem follows,
and the proof is complete. \qed

\section*{Acknowledgments}

The author thanks the Max Planck Institute for Mathematics in
Bonn for its support and excellent working conditions during the
preparation of this paper.

The author discloses that the large language model
Claude (Anthropic) was used in the preparation of this paper, in
particular for suggestions on writing and exposition, for literature
searches, and for proof checking. The author takes full responsibility
for the mathematical content.

\end{document}